\newtheorem{theorem}{Theorem}[section]
\newtheorem{lemma}[theorem]{Lemma}
\newenvironment{proof}{\par{\bf Proof\ }}{}
\newtheorem{definition}[theorem]{Definition}
\newtheorem{example}[theorem]{Example}
\newtheorem{proposition}[theorem]{Proposition}
\newtheorem{corollary}[theorem]{Corollary}
\newtheorem{remark}[theorem]{Remark}
\numberwithin{equation}{section}
 \def\NN{{\mathbb{N}}}
 \def\cH{{\mathcal{H}}} 
  \def\cL{{\mathcal{L}}}
  \def\cR{{\mathcal{R}}}
\def\cS{{\mathcal{S}}}
\begin{document}

\title{On generalized inverses and Green's relations}

\author{Xavier Mary\footnote{email: xavier.mary@ensae.fr}\\
\textit{\small Ensae - CREST, 3, avenue Pierre Larousse 92245
Malakoff Cedex, France}}

\date{}

\maketitle

\begin{keyword} generalized inverse, Green's relations, semigroup \MSC Primary 15A09 \sep
20M99
\end{keyword}

\begin{abstract}
We study generalized inverses on semigroups by means of Green's
relations. We first define the notion of inverse along an element
and study its properties. Then we show that the classical
generalized inverses (group inverse, Drazin inverse and
Moore-Penrose inverse) belong to this class.
\end{abstract}

There exist many specific generalized inverses in the literature,
such as the group inverse, the Moore-Penrose inverse \cite{Ben74} or
the Drazin inverse (\cite{Drazin58}, \cite{Ben74}). Necessary and
sufficient conditions for the existence of such inverses are known
(\cite{Green51}, \cite{Drazin58}, \cite{Harte92}, \cite{Huang93},
\cite{Koliha01}, \cite{Koliha02}, \cite{Patricio04},
\cite{Mary2008SM}), as are their properties. If one looks carefully
at these results, it appears that these existence criteria all
involve Green'relations \cite{Green51}, and that all inverses have
double commuting properties. So one may wonder whether we could
unify these different notions of invertible.

We propose here to define a new type of generalized inverse, the
inverse along an element, that is based on Green's relation's
$\cL,\; \cR$ and $\cH$ \cite{Green51}, and the related notion of
trace product (\cite{Clifford56}, \cite{Pastijn82}). It appears that
this notion encompass the classical generalized inverses but is of
richer type.
By deriving general existence criteria and properties of this inverse, we will then recover in a common framework the classical results. The framework is the one of semigroups, hence the
results are directly applicable in rings or algebras.\\

This article is divided as follows: in the first section, we review
the principal definitions and theorems we will use regarding
generalized inverses and Green's relations. In the second section we
define our new generalized inverse, the inverse along an element,
and derive its properties. In the third section we finally show that
the classical generalized inverses belong to this class, and
retrieve their properties.

\section{Preliminaries}

As usual, for a semigroup $S$, $S^1$ denotes the monoid generated by
$S$, and $E(S)$ its set of idempotents. We first review the various
notions of generalized invertibility and then recall Green's
relations, together with some results linked with invertibility.

\subsection*{Generalized inverses}

Basically, a generalized inverse is an element that share some (but not all) the properties of the classical inverse in a group. We review here the classical notions.\\
Let $a\in S$. The element $a$ in $S$ is called regular if $a\in
aAa$, that is there exists $b$ such that $aba=b$ . In this case $b$
is known as an inner inverse of $a$. If there exists $b\in S$,
$bab=b$ then $b$ is an called an outer inverse of $a$. An element
$b$ that is both an inner and an outer inverse is usually simply
called an inverse of $a$. If it verifies only one of the two
conditions, it is called a generalized inverse. The three common
generalized inverses are defined by imposing additional
properties.\\

If $b$ is an inverse (inner and outer) of $a$ that commutes with $a$
then $b$ is a called a commuting inverse (or group inverse) of $a$.
Such an inverse is unique and usually denoted by $a^{\sharp}$. Its
name ``group inverse'' comes from the following result:

\begin{corollary}[(corollary 4 p.~275 in \cite{Clifford56})]\label{cor4Cliff}
If $a$ and $a'$ are mutually inverse elements of $S$ then $aa'=a'a$ if and only if $a$ and $a'$ belong to the same $\cH$-class $H$. If this be the case, $H$ is a group, and $a$ and
$a'$ are inverses therein in the sense of group theory.
\end{corollary}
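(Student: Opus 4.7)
My plan is to use only the mutual-inverse identities $aa'a=a$ and $a'aa'=a'$ together with the standard descriptions $x\,\cR\,y\iff xS^{1}=yS^{1}$ and $x\,\cL\,y\iff S^{1}x=S^{1}y$. For the forward implication, substituting $aa'=a'a$ into $a=aa'a$ gives both $a=a'a^{2}$ and $a=a^{2}a'$, and by symmetry $a'=aa'^{2}$ and $a'=a'^{2}a$. These four identities immediately yield $aS^{1}=a'S^{1}$ and $S^{1}a=S^{1}a'$, which is exactly $a\,\cH\,a'$.

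For the converse I would first record a universal fact, independent of any commutation hypothesis: from $a=(aa')a$ and $aa'\in aS^{1}$ one reads off $aa'\,\cR\,a$; from $a'=a'(aa')$ and $aa'\in S^{1}a'$ one reads off $aa'\,\cL\,a'$; and symmetrically $a'a\,\cR\,a'$, $a'a\,\cL\,a$. Assuming now $a\,\cH\,a'$, transitivity of $\cR$ and $\cL$ places both idempotents $aa'$ and $a'a$ inside the common $\cH$-class $H$. To force them equal I would invoke the standard fact that an $\cH$-class contains at most one idempotent: if $e,f\in H\cap E(S)$, then $e\,\cR\,f$ (writing $e=fs$) forces $fe=e$, while $e\,\cL\,f$ (writing $f=te$) forces $fe=f$, whence $e=f$.

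Once $aa'=a'a$ is established, $H$ meets $E(S)$ in the idempotent $e:=aa'$, so Green's ``group $\cH$-class'' theorem endows $H$ with the structure of a group with identity $e$; the identities $aa'=a'a=e$ then exhibit $a'$ as the group-theoretic inverse of $a$ in $H$. The whole difficulty lies in this converse, and more precisely in the two ambient structural results about Green's relations --- uniqueness of idempotents in an $\cH$-class and the ``group $\cH$-class'' theorem --- without which the argument does not close; by contrast the forward direction is a one-line substitution.
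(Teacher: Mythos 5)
Your proof is correct and complete: the forward direction by substitution, the converse by placing the idempotents $aa'$ and $a'a$ in the common $\cH$-class and invoking uniqueness of idempotents there, and the group structure via Green's theorem all go through. The paper itself imports this statement from Miller and Clifford without reproving it, and your argument is the standard one, resting on exactly the two structural facts the paper records as Corollary \ref{cor1Cliff}(1) and Theorem \ref{th7Green}(1) --- both of which you also justify correctly in passing.
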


To study non regular elements, Drazin \cite{Drazin58} introduced a
another a commuting generalized inverse, that is not inner in
general. $a\in S$ is Drazin invertible if there exists $b\in S$ and
$m\in \NN^*$:
\begin{enumerate}
\item $ab=ba$;
\item $a^m=a^{m+1}b$;
\item $b=b^2a$.
\end{enumerate}
A Drazin inverse of $a$ is
unique if it exists and will be denoted by $a^D$ in
the sequel. 

Finally, when $S$ is a endowed with an involution $*$ that makes it
an involutive semigroup (or $*$-semigroup), \textit{i.e.} the
involution verifies $(a^*)^*=a$ and $(ab)^*=b^*a^*$, Moore
\cite{Moore20} and Penrose \cite{Penrose55} studied inverses $b$ of
$a$ with the additional property that $(ab)^*=ab$ and $(ba)^*=ba$.
Once again this inverse, if it exists, is unique. It is usually
called the Moore-Penrose inverse (or pseudo-inverse) of $a$ and will
be denoted by
$a^{+}$.

\subsection*{Green's relations}

For elements $a$ and $b$ of $S$, Green's relations $\cL$, $\cR$ and
$\cH$ are defined by:
\begin{enumerate}
\item   $a \cL b$ if and only if $S^1 a = S^1 b$.
\item $a \cR b$ if and only if $aS^1  = bS^1 $.
\item $a \cH b$ if and only if $a\cL b$ and $a\cR b$.
\end{enumerate}
That is, $a$ and $b$ are $\cL$-related (resp. $\cR$-related) if they
generate the same left (resp. right) principal ideal, and
$\cH=\cL\cap \cR$. These are equivalence relations on $S$, and we
denote the $\cL$-class (resp. $\cR$-class, $\cH$-class) of $a$ by
$\cL_a$ (resp. $\cR_a, \cH_a$). The $\cR$ and $\cL$ relations are
dual to one another and left (resp. right) compatible.

In addition to Green's relation it is useful to our purpose to
introduce their generalization $\cR^*$ and $\cL^*$: $a\cR^*b$ if
$a\cR b$ in some oversemigroup of $S$. By lemma 1.1 in
\cite{Fountain82}, this is equivalent to $$(xa=ya \iff xb=yb \;
\forall x,y\in S^1).$$ Relation $\cL^*$ is defined dually. We have
$\cR\subset \cR^*$ and $\cL\subset \cL^*$.

Finally we will also need the notion of trace product: for $a, b\in
S$, we say that $ab$ is a trace product if $ab\in \cR_a\cap \cL_b$.

We will use the following results:

\begin{theorem}[(Theorem 3 p.~277 \cite{Clifford56})]\label{th3Cliff}
Let $a, b\in S$. $ab$ is a trace product if and only if  $\cR_b\cap
\cL_a$ contains an idempotent element; if this be the case then
$$aH_b=H_a b=H_a H_b=H_{ab}=\cR_a\cap \cL_b.$$
\end{theorem}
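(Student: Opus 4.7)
The plan is to prove the biconditional by an explicit construction of an idempotent (resp.\ of a trace product), and then read off the four-way equality from two applications of Green's lemma.

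For $(\Leftarrow)$, given an idempotent $e\in\cR_b\cap\cL_a$, I would extract factorizations $e=bp$, $b=eq$ from $e\cR b$ and $e=ra$, $a=se$ from $e\cL a$, with $p,q,r,s\in S^1$. Idempotence of $e$ forces $eb=e^2q=eq=b$ and $ae=se^2=se=a$, so $e$ is a right identity on $a$ and a left identity on $b$. Then $a=ae=a(bp)=(ab)p\in abS^1$ gives $ab\cR a$, and dually $b=eb=(ra)b=r(ab)\in S^1ab$ gives $ab\cL b$, i.e.\ $ab$ is a trace product. For $(\Rightarrow)$, the trace-product hypothesis yields $u,v\in S^1$ with $a=abu$ and $b=vab$. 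The key computation is $va=v(abu)=(vab)u=bu$: the element $e:=va=bu$ then satisfies $e^2=(va)(bu)=v(abu)=va=e$, and the two expressions for $e$, combined with $a=abu=ae$ and $b=vab=eb$, witness $e\in\cL_a\cap\cR_b$ through the four containments of principal one-sided ideals.

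For the chain of equalities, the outermost identity $H_{ab}=\cR_a\cap\cL_b$ is immediate from $\cR_{ab}=\cR_a$ and $\cL_{ab}=\cL_b$, which are read off the trace assumption. Applying Green's lemma to $b\cL ab$ (with left transporters $a$ and $v$) gives an $\cH$-class-preserving bijection $\cR_b\to\cR_{ab}$ by left multiplication by $a$, whence $aH_b=H_{ab}$; the dual argument via $a\cR ab$ yields $H_a b=H_{ab}$. For $H_aH_b=H_{ab}$, the inclusion $H_aH_b\supseteq aH_b=H_{ab}$ is immediate; for the reverse inclusion, I would fix $a'\in H_a$, note that $a'b\in H_a b=H_{ab}$ forces $a'b\cL b$, and invoke Green's lemma once more to obtain $a'H_b=H_{a'b}=H_{ab}$, so every product $a'b'$ with $b'\in H_b$ lands in $H_{ab}$.

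The only step requiring any ingenuity is the idempotent construction in $(\Rightarrow)$: the coincidence $va=bu$ is not visible until one computes it, yet it is precisely this identity that simultaneously forces $e^2=e$ and places $e$ in the correct Green classes. Everything else is standard Green's-lemma bookkeeping, so I expect the write-up to be short once the candidate $e=va=bu$ has been identified.
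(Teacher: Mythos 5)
Your proof is correct, but note that the paper itself gives no proof of this statement: it is quoted verbatim from Miller and Clifford \cite{Clifford56} as a known result, so there is nothing in the text to compare against. Your argument --- the idempotent $e=va=bu$ extracted from the witnesses $a=abu$, $b=vab$, followed by two applications of Green's lemma to the translations $x\mapsto ax$ on $\cR_b$ and $y\mapsto yb$ on $\cL_a$ --- is precisely the classical proof from that reference, and all the verifications ($e^2=e$, $e\cR b$, $e\cL a$, and the four-way equality including the reverse inclusion $H_aH_b\subseteq H_{ab}$ via $a'H_b=H_{a'b}=H_{ab}$) check out.
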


\begin{lemma}[(lemma 4 p.~272 in \cite{Clifford56})]\label{lem4Cliff}
An idempotent element $e$ of $S$ is a right identity element of
$\cL_e$, a left identity element of $\cR_e$ and a two-sided identity
element of $\cH_e$.
\end{lemma}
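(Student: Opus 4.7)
The plan is to unpack the definitions of $\cL$ and $\cR$ in terms of the principal ideals in $S^1$, and then exploit the single hypothesis $e^2=e$. First I would handle the right-identity claim on $\cL_e$: if $a\in\cL_e$ then $S^1 a=S^1 e$, so in particular $a\in S^1 e$, meaning there exists $y\in S^1$ with $a=ye$. Multiplying on the right by $e$ and using idempotency gives $ae=y e^2=ye=a$, which is exactly what is required.

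Next I would dualize the argument for the left-identity claim on $\cR_e$: if $b\in\cR_e$ then $bS^1=eS^1$, hence $b=ez$ for some $z\in S^1$, and $eb=e^2z=ez=b$. Finally, since $\cH_e=\cL_e\cap\cR_e$ by definition, any $c\in\cH_e$ satisfies both $ec=c$ and $ce=c$, so $e$ is a two-sided identity on $\cH_e$.

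There is no real obstacle; the only point that deserves a word of caution is the use of $S^1$ rather than $S$. The element $y$ (respectively $z$) produced from the ideal equality may equal the formal identity $1\in S^1\setminus S$, in which case $a=e$ (respectively $b=e$), and the conclusion then reduces to $e^2=e$, which holds by hypothesis. So the argument is uniform in $S^1$ and no case split is needed.
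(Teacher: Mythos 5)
Your proof is correct, and it is the standard argument: unwind $S^1a=S^1e$ (resp.\ $bS^1=eS^1$) to write $a=ye$ (resp.\ $b=ez$) and apply $e^2=e$, with the $\cH_e$ claim following from $\cH_e=\cL_e\cap\cR_e$. The paper itself gives no proof --- it imports this as lemma 4 of Miller and Clifford --- so there is nothing to compare against; your observation that the computation goes through uniformly even when $y$ or $z$ is the adjoined identity is a sensible point of care.
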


\begin{corollary}[(corollary 1 p.~272 and corollary 1 p.~273 in \cite{Clifford56})]\label{cor1Cliff}
$\,$\\ \vspace{-.8cm}
\begin{enumerate}
\item No $\cH$-class contains more than one idempotent element;
\item an $\cH$-class $\cH_b$ contains an inverse of $a\in S$ if and only if both the $\cH$-class $\cR_a \cap \cL_b$ and $\cR_b \cap \cL_a$ contain idempotent elements;
\item no $\cH$-class contains more than one inverse of $a\in S$.
\end{enumerate}
\end{corollary}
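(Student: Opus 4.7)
My plan is to prove the three parts in order, using only Lemma 2 (lem4Cliff) and Theorem 3 (th3Cliff) as the main tools, plus a small amount of direct manipulation.

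For part (1), suppose $e$ and $f$ are idempotents of the same $\cH$-class $H$. By Lemma 2 (applied to $\cH_e = \cH_f = H$), $e$ is a two-sided identity of $H$ and so is $f$; applying $e$ to $f$ gives $ef = f$ using $e$ as a left identity on $f \in H$, and also $ef = e$ using $f$ as a right identity on $e \in H$, so $e=f$. This is the only place where Lemma 2 is used in its ``strongest'' form, and the argument is essentially a one-liner.

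For part (2), I would handle the two implications separately. \emph{Necessity}: if $b' \in \cH_b$ is an inverse of $a$, then $ab'$ and $b'a$ are both idempotents (using $ab'a=a$ and $b'ab'=b'$). Moreover $ab' \in aS^1 \cap S^1 b'$ and the equalities $a = (ab')a$ and $b' = b'(ab')\cdot$? ... more carefully: from $a = ab'a$ we get $aS^1 \subseteq ab'S^1$ and clearly $ab'S^1 \subseteq aS^1$, so $ab'\cR a$; dually $ab'\cL b'$. Since $b'\in \cH_b$ we have $\cL_{b'}=\cL_b$, hence $ab'\in \cR_a\cap \cL_b$. Symmetrically $b'a\in \cR_b\cap \cL_a$. \emph{Sufficiency}: suppose the two intersections contain idempotents $e\in \cR_a\cap \cL_b$ and $f\in \cR_b\cap \cL_a$. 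By Theorem 3 the existence of $f$ forces $ab$ to be a trace product, and moreover yields $aH_b=H_{ab}=\cR_a\cap \cL_b$. Since $e$ lies in this set, there exists $b'\in H_b$ with $ab'=e$. Then $ab'a = ea = a$ because $e$, being idempotent in $\cR_a$, is a left identity of $\cR_a$ (Lemma 2), and $b'ab'=b'e=b'$ because $e$ is a right identity of $\cL_b=\cL_{b'}$. So $b'$ is an inverse of $a$ lying in $\cH_b$. The main care point here is to check I am invoking Theorem 3 with the correct $a,b$ labelling (the idempotent in $\cR_b\cap \cL_a$ is what guarantees $ab$ is a trace product, not the one in $\cR_a\cap \cL_b$).

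For part (3), suppose $b'$ and $b''$ are both inverses of $a$ lying in a common $\cH$-class $\cH_b$. By the necessity argument from part (2), both $ab'$ and $ab''$ are idempotents in $\cR_a\cap \cL_b$, and both $b'a$ and $b''a$ are idempotents in $\cR_b\cap \cL_a$. Each of these intersections is a single $\cH$-class (by Theorem 3, once we know it contains an idempotent), so part (1) forces $ab' = ab''$ and $b'a = b''a$. Then
\[
b' \;=\; b'ab' \;=\; b''ab' \;=\; b''ab'' \;=\; b''.
\]
The whole proof is essentially bookkeeping inside $\cH$-classes; I expect the only slightly delicate moment to be making sure that in the sufficiency half of part (2) the correct idempotent is used at the correct step, and that the factorization $ab'=e$ with $b'\in H_b$ is justified by $aH_b=H_{ab}$ rather than by a direct guess.
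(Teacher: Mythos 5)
Your proof is correct, but note that the paper does not actually prove this corollary: it is quoted from Miller--Clifford, with only a remark that statement 2 there assumes $a$ regular and that the general case follows from Von Neumann's lemma. So there is no proof in the paper to compare against; what you have done is reconstruct the result from the two tools the paper does state (Lemma \ref{lem4Cliff} and Theorem \ref{th3Cliff}), and the reconstruction is sound. Part (1) is the standard two-sided-identity argument. In part (2) your necessity direction correctly places $ab'$ in $\cR_a\cap\cL_b$ and $b'a$ in $\cR_b\cap\cL_a$ via the usual ideal inclusions, and your sufficiency direction correctly uses the idempotent $f\in\cR_b\cap\cL_a$ to trigger Theorem \ref{th3Cliff} (you are right that this is the delicate labelling point) and then uses $aH_b=\cR_a\cap\cL_b$ to factor $e=ab'$ with $b'\in H_b$; the verifications $ea=a$ and $b'e=b'$ via Lemma \ref{lem4Cliff} are exactly what is needed. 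A pleasant side effect of your argument is that it disposes of the regularity caveat the paper raises: in the sufficiency direction the idempotent $e\in\cR_a$ already forces $a$ regular (indeed $ab'a=a$ comes out of the construction), so no separate appeal to Von Neumann's lemma is required. Two tiny points: in part (3), the fact that $\cR_a\cap\cL_b$ is a single $\cH$-class needs no appeal to Theorem \ref{th3Cliff} at all, since any nonempty intersection of an $\cR$-class with an $\cL$-class is an $\cH$-class by the definition $\cH=\cL\cap\cR$; and the half-finished sentence ``$b'=b'(ab')\cdot$?'' in your necessity argument should be cleaned up into the inclusion chain you in fact use ($b'=b'(ab')\in S^1ab'$ giving $ab'\cL b'$).
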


To be exact, statement $2.$ is not precisely the same as in
\cite{Clifford56} where it is assumed that $a$ is regular. But this
is a direct consequence of the existence of an idempotent element in
$\cR_a$ by Von Neumann's lemma 6 \cite{VonNeumann36}.

\begin{theorem}[(Theorem 7 p.~169 in \cite{Green51})]\label{th7Green}
$\,$\\ \vspace{-.8cm}
\begin{enumerate}
\item If a $\cH$-class contains an idempotent $e$, then it is a group
with $e$ as the identity element.
\item If for any $a,b\in S$, $a$, $b$ and $ab$ belong to the same $\cH$-class $H$, then $H$ is a group.
\end{enumerate}
\end{theorem}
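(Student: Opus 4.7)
The plan is to derive both parts directly from Lemma \ref{lem4Cliff}, Theorem \ref{th3Cliff}, and Corollary \ref{cor1Cliff}. For part 1, I would begin by observing that Lemma \ref{lem4Cliff} already gives that $e$ is a two-sided identity on $\cH_e$, so the remaining work is to establish closure under the semigroup operation and the existence of inverses inside $\cH_e$.

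For closure, I would pick $a,b \in \cH_e$ and note that $\cR_b = \cR_e$ and $\cL_a = \cL_e$, so $\cR_b \cap \cL_a = \cH_e$ contains the idempotent $e$. Theorem \ref{th3Cliff} then both ensures $ab$ is a trace product and places it in $\cR_a \cap \cL_b = \cH_e$. For inverses, I would take a single $a\in\cH_e$ and apply Corollary \ref{cor1Cliff}(2) with $b\in\cH_e$: the hypothesis on idempotents in $\cR_a\cap\cL_b$ and $\cR_b\cap\cL_a$ is automatic since both intersections equal $\cH_e$. This yields an inverse $a'\in\cH_e$ of $a$, meaning $aa'a=a$ and $a'aa'=a'$. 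The element $aa'$ then lies in $\cH_e$ by closure and is idempotent since $(aa')^2 = a(a'aa') = aa'$. Corollary \ref{cor1Cliff}(1) forces $aa'=e$, and symmetrically $a'a=e$, so $\cH_e$ is a group with identity $e$.

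For part 2, I would reduce to part 1 by locating an idempotent in $H$. Since $a,b,ab$ all lie in the same $\cH$-class $H$, we have $\cR_a = \cR_b$ and $\cL_a = \cL_b$, and also $ab \in \cR_a \cap \cL_b$. Hence $ab$ is a trace product, so by Theorem \ref{th3Cliff} the set $\cR_b \cap \cL_a = \cR_a \cap \cL_a = H$ contains an idempotent, and part 1 finishes the argument.

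The only genuinely delicate step is showing closure of $\cH_e$ in part 1: the multiplication in $S$ a priori has no reason to respect $\cH$-classes, and the mechanism that saves us is precisely the trace-product criterion, which in turn rests on the presence of the idempotent $e$ in the relevant intersection. Once closure is in hand, the rest is a direct assembly of the corollaries already cited.
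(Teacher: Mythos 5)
Your proposal is correct, but there is nothing in the paper to compare it against: Theorem \ref{th7Green} is imported verbatim from Green's 1951 paper as a preliminary and is not proved in this article at all. What you have written is a sound, self-contained derivation from the other quoted preliminaries. In part 1, the chain Lemma \ref{lem4Cliff} (identity), Theorem \ref{th3Cliff} applied to $\cR_b\cap\cL_a=\cH_e\ni e$ (closure with $ab\in\cR_a\cap\cL_b=\cH_e$), Corollary \ref{cor1Cliff}(2) (existence of an inverse $a'\in\cH_e$), and Corollary \ref{cor1Cliff}(1) (forcing the idempotents $aa'$ and $a'a$ to equal $e$) is complete; note also that your use of Corollary \ref{cor1Cliff}(2) is legitimate in the form stated here, since $\cR_a=\cR_e$ contains the idempotent $e$, which is exactly the condition the author invokes to drop the regularity hypothesis of the original Miller--Clifford statement. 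Part 2 correctly reduces to part 1 via the trace-product criterion: $ab\in H=\cR_a\cap\cL_b$ forces an idempotent into $\cR_b\cap\cL_a=H$. This is essentially the classical Miller--Clifford route to Green's theorem, so your argument is a reasonable reconstruction of the standard proof the author is implicitly relying on by citation.
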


\section{A new generalized inverse: the inverse along an element}

\begin{theorem}\label{th_equiv}
Let $a,d\in S$. Then four following statements are equivalent:
\begin{enumerate}
\item there exists $b\in S$ verifying $bad=d=dab$ and $b\in dS\cap Sd$;
\item there exists $b\in S$ verifying $b$ is outer inverse of $a$ and $b\cH d$;
\item there exists $b\in S$, there exists an idempotent $e\in \cR_d$, such that $b$ is an inner and outer inverse of $ae$ and $b\cH d$;
\item there exists $b\in S$, there exists an idempotent $f\in \cL_d$, such that $b$ is an inner and outer inverse of $fa$ and $b\cH
d$.
\end{enumerate}
Moreover, if $b\in S$ verifies one of the four statements, it
verifies the four simultaneously.
\end{theorem}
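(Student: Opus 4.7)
The plan is to establish $1 \Leftrightarrow 2$ directly, then $2 \Leftrightarrow 3$ and $2 \Leftrightarrow 4$ by (mutually symmetric) arguments, keeping the same $b$ as witness throughout so that the final ``moreover'' clause is obtained without extra work.

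For $1 \Rightarrow 2$, I would write $b = du = vd$ with $u,v\in S^{1}$ and compute $bab = v(dab) = vd = b$, giving the outer inverse property. For the Green relations, $bad = d$ yields $d \in S^{1}b$ while $b=vd$ gives $b \in S^{1}d$, so $b \cL d$; dually $dab = d$ and $b = du$ give $b \cR d$, hence $b \cH d$. For $2 \Rightarrow 1$, from $b \cH d$ one has $d = yb = bx$ and $b = du$ for suitable elements, and then $dab = y(bab) = yb = d$ and $bad = (bab)x = bx = d$; moreover $b = b(ab) = d(uab) \in dS$ and dually $b \in Sd$, so $b\in dS\cap Sd$ is fulfilled.

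For $2 \Rightarrow 3$, I would take $e := ba$. From $bab=b$ one gets $e^{2}=baba=(bab)a=ba=e$, and the relations $e = ba \in bS^{1}$ together with $b = (ba)b = eb$ show $e \cR b$, so $e\in\cR_b=\cR_d$. The inner and outer inverse conditions for $ae=aba$ then reduce to one-line verifications using $bab=b$: $(ae)b(ae) = a(bab)a\cdot ba = aba = ae$ and $b(ae)b = (bab)(ab) = bab = b$. Conversely, for $3 \Rightarrow 2$: since $b \cH d$ and $e \cR d$ we have $b \in \cR_e$, so Lemma~\ref{lem4Cliff} yields $eb = b$; substituting this into the given outer inverse relation gives $bab = b\cdot a\cdot(eb) = b(ae)b = b$.

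The equivalence $2 \Leftrightarrow 4$ is entirely dual, with $f := ab$ taken as an idempotent of $\cL_b=\cL_d$, and using that $f$ is a right identity on $\cL_f$ (Lemma~\ref{lem4Cliff}) to convert $bf = b$ into $bab = b$. The main conceptual point, and the only nontrivial step, is recognizing that $ba$ and $ab$ are the natural idempotents bridging the abstract Green-class statement $b\cH d$ with the concrete inner/outer inverse statements in $3$ and $4$; once this is noted, Lemma~\ref{lem4Cliff} translates each $\cH$-membership into the multiplicative identities $eb=b$ or $bf=b$ needed to close the loop, and every implication becomes a short algebraic manipulation. Since $b$ itself is never altered in any implication, the ``moreover'' clause follows immediately.
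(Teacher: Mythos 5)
Your proposal is correct, and it reorganizes the argument in a way that differs from the paper: the paper proves the single cycle $1\Rightarrow 2\Rightarrow 3\Rightarrow 4\Rightarrow 1$, whereas you use statement $2$ as a hub and prove $1\Leftrightarrow 2$, $2\Leftrightarrow 3$, $2\Leftrightarrow 4$ directly. The key ingredients coincide where the proofs overlap — both take $e=ba$ as the idempotent for statement $3$, and both invoke Lemma~\ref{lem4Cliff} to convert membership in an $\cL$- or $\cR$-class of an idempotent into the identities $eb=b$ or $bf=b$ — but your choice $f=ab$ for statement $4$ is cleaner than the paper's, which in the step $3\Rightarrow 4$ constructs $f=xaed$ from a representation $b=dx$ and must then verify $f^2=f$ and $f\cL d$ by hand (note that $fa=aba=ae$ with your choice, so the verifications for $3$ and $4$ are literally the same computation). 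Your hub structure also makes the ``moreover'' clause immediate, since each statement is tied to statement $2$ with the same witness $b$, while the paper's cycle requires observing that $b$ is carried unchanged around the loop. Two cosmetic slips worth fixing: in $1\Rightarrow 2$ the containments are transposed — it is $dab=d$ that gives $d\in S^1b$ (hence the $\cL$-half) and $bad=d$ that gives $d\in bS^1$ (hence the $\cR$-half), though both hypotheses are available so nothing is lost; and the displayed intermediate expression in the verification of $(ae)b(ae)=ae$ is garbled, though the identity itself reduces correctly via $bab=b$.
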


\begin{proof}
$[1. \Rightarrow 2.]$ let $b, x, y\in S$ such that $bad=d=dab$ and
$b=dx=yd$. Then $bab=badx=dx=b$  and $b$ is an outer inverse of $a$.
but $bad=d=dab$ implies that $d\in bS\cap Sb$ and finally
$b\cH d$.\\
$[2. \Rightarrow 3.]$ Let $b\in S$, $bab=b$ and $b\cH d$. Then one
verifies easily that $ba$ is an idempotent in $\cR_d$. First,
$bab=b$ implies $baba=ba$ and $ba$ is idempotent. Second, since
$b\cH d$, there exists $x\in S,\; d=bx$. But $bab=b$ and it follows
that $d=bx=babx$. Also there exists $y\in S,\; dy=b$, and $ba=dya$.
Finally, $b$ is an inner and outer inverse of $a(ba)$ by direct
calculations.\\
$[3. \Rightarrow 4.]$ Suppose $b$ is the generalized inverse of
$ae$, with $e\in \cR_d\cap E(S)$. Since $b\in \cH_d,\; \exists
x,y,x',y'\in S^1,$  $b=dx=x'd,\; d=by=y'b$. Pose $f=xaed$. Then $f$
is idempotent
($ff=xaedxaed=xaebaed=xaed=f$) and $f\cL d$ ($f=x'aed,\; d=by=baeby=dxaeby=df$).\\
Note that by lemma \ref{lem4Cliff}, $bf=b$ and $eb=b$. It follows
that $baeb=bab=bfab=b$. But
$$fabfa=faba=xaedaba=xaey'baeba=xaey'ba=fa$$  and finally $b$ is an inner and outer inverse of
$fa$.\\
$[(4) \Rightarrow (1)]$ Suppose $b$ is the generalized inverse of
$fa$, with $f\in \cL_d\cap E(S)$.  First, $b\cH d$ implies that
$b\in dS\cap Sd$. Also there exists $y,y' \in S,\; d=by=y'b$. But
$bab=b$ ($f$ is a right identity on $\cL_d=\cL_b$ by lemma
\ref{lem4Cliff} and it follows that $bad=baby=by=d=y'b=y'bab=dab$.
This ends the proof. \vskip-1em
\end{proof}

\begin{definition}\label{df1} Let $a,d\in S$. We say that $b\in S$ is an inverse of $a$ \textbf{along d}
if it verifies one of the four equivalent statements of theorem
\ref{th_equiv}. If moreover the inverse $b$ of $a$ along $d$
verifies $aba=a$, we say that $b$ is an \textbf{inner} inverse of
$a$ along $d$.
\end{definition}


Note that we have also proved that $ba$ and $ab$ are idempotents in
the $\cR$ and $\cL$-class of $d$ respectively:
\begin{corollary}\label{propouter}
$ba\in \cR_d\cap E(S)$ and $ab\in \cL_d\cap E(S)$.
\end{corollary}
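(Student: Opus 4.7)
My plan is to observe that the first half of the corollary, $ba \in \cR_d \cap E(S)$, has already been established verbatim inside the proof of Theorem \ref{th_equiv}. Specifically, the implication $[2. \Rightarrow 3.]$ shows that $bab = b$ implies $baba = ba$, giving idempotency, and then uses $b \cH d$ (writing $d = bx$ and $b = yd$) to conclude $d = babx \in ba \cdot S^1$ and $ba = dya \in d \cdot S^1$, hence $ba \cR d$. So nothing new has to be argued for this half.

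For the second half, $ab \in \cL_d \cap E(S)$, I would run the dual argument. Idempotency is immediate: $(ab)^2 = a(bab) = ab$, again from $bab = b$. For the Green's equivalence $ab \cL d$, I would exploit statement 1 of Theorem \ref{th_equiv}, which guarantees $d = dab$; this exhibits $d \in S^1 \cdot ab$. Conversely, $b \cH d$ supplies an $x' \in S^1$ with $b = x'd$, so that $ab = ax'd \in S^1 \cdot d$. Combining the two inclusions gives $S^1 ab = S^1 d$, i.e.\ $ab \cL d$.

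I do not foresee a real obstacle: the corollary is essentially a bookkeeping observation about what the proof of Theorem \ref{th_equiv} has already produced, together with its formal dual. The only point worth flagging is that the two halves are not perfectly symmetric in the proof as written (only $ba$ is treated explicitly in $[2. \Rightarrow 3.]$), so one does have to invoke the defining identity $d = dab$ from statement 1 to complete the $\cL$-side, rather than reading it off the earlier computations.
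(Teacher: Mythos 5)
Your proof is correct and matches the paper's intent exactly: the paper gives no separate argument for this corollary, merely asserting before its statement that these facts were ``also proved'' in the proof of Theorem \ref{th_equiv}, and what you write is precisely the content of that remark --- the $ba$ half read off from the implication $[2.\Rightarrow 3.]$, and the $ab$ half obtained by the dual computation using $d=dab$ and $b\in Sd$. Your observation that the $\cL$-side is not literally written out in the theorem's proof and needs the identity $d=dab$ from statement 1 is accurate, but filling it in is routine and your argument does so correctly.
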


\begin{example}\label{exT}
Let $S=T_3$ be the full transformation semigroup, that consists of all functions from the set $\{1, 2, 3\}$ to itself with multiplication the composition of functions. We write $(a b
c)$ for
the function which sends $1$ to $a$, $2$ to $b$, and $3$ to $c$.\\
The egg-box diagram form $T_3$ is as follows ($\cR$-classes are rows, $\cL$-classes columns and $\cH$-classes are
squares; bold elements are idempotents):\\

$\,$ \begin{tabular}{cccccccc}
  \cline{1-3}
  \multicolumn{1}{|c|}{\textbf{(1 1 1)}} & \multicolumn{1}{|c|}{\textbf{(2 2 2)}} & \multicolumn{1}{|c|}{\textbf{(3 3 3)}} &  &  &  & &  \\
  \cline{1-3} \cline{4-6}
   &  &  & \multicolumn{1}{|c|}{\textbf{(1 2 2)},}  &\multicolumn{1}{|c|}{\textbf{(1 3 3)},}  &\multicolumn{1}{|c|}{(2 3 3),}  & & \\
   &  &  & \multicolumn{1}{|c|}{(2 1 1)} & \multicolumn{1}{|c|}{(3 1 1)}  & \multicolumn{1}{|c|}{(3 2 2)} & & \\
   \cline{4-6}
   &  &  & \multicolumn{1}{|c|}{(2 1 2),}  &\multicolumn{1}{|c|}{(3 1 3),}  &\multicolumn{1}{|c|}{\textbf{(3 2 3)},}  & &  \\
   &  &  & \multicolumn{1}{|c|}{\textbf{(1 2 1)}} & \multicolumn{1}{|c|}{(1 3 1)}  & \multicolumn{1}{|c|}{(2 3 2)} &  & \\
   \cline{4-6}
   &  &  & \multicolumn{1}{|c|}{(2 2 1),}  &\multicolumn{1}{|c|}{(3 3 1),}  &\multicolumn{1}{|c|}{(3 3 2),}  & & \\
   &  &  & \multicolumn{1}{|c|}{(1 1 2)} & \multicolumn{1}{|c|}{\textbf{(1 1 3)}}  & \multicolumn{1}{|c|}{\textbf{(2 2 3)}} & & \\
   \cline{4-8}
   &  &  &  &  &  & \multicolumn{2}{|c|}{\textbf{(1 2 3)}, (2 3 1),} \\
   &  &  &  &  &  & \multicolumn{2}{|c|}{(3 1 2), (1 3 2),} \\
   &  &  &  &  &  & \multicolumn{2}{|c|}{(3 2 1), (2 1 3)} \\
  \cline{7-8}\\
\end{tabular}

For instance $\cH_{(232)}=\{(232),(323)\}$. Direct computations give
that $a=(221)$ is (inner) invertible along $d=(232)$ with inverse
$b=(323)$, and that $a'=(122)$ and $a''=(123)$ are invertible along
$d=(232)$ but not inner invertible (the inverse is $b=(323)$).
$(111)$ is not invertible along $(232)$.
\end{example}

We now state our second theorem, that deals with existence and
uniqueness of inverses along an element:
\begin{theorem}\label{thmexun}
Let $a,d\in S$.
\begin{enumerate}
\item $a$ is invertible along $d$ if and only if there exists $e\in \left(\cR_d\cap E\left(S\right)\right)$, $\left(ae\right)d$ and $d\left(ae\right)$ are trace
products.
\item If an inverse along $d$ exists, it is unique.
\end{enumerate}
\end{theorem}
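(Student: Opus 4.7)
The plan is to use the equivalent characterizations in Theorem \ref{th_equiv} as bridges, together with the Clifford-style Theorem \ref{th3Cliff} and Corollary \ref{cor1Cliff}. Statement 3 of Theorem \ref{th_equiv} is tailored for existence via the trace-product condition, while statement 1 is the most convenient form for the algebraic identities needed for uniqueness.

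For part 1, I would route both implications through statement 3 of Theorem \ref{th_equiv}: $a$ is invertible along $d$ iff there exists an idempotent $e \in \cR_d$ and an inner-and-outer inverse $b$ of $ae$ lying in $\cH_d$. Corollary \ref{cor1Cliff}(2) then rewrites ``$\cH_d$ contains an inverse of $ae$'' as ``both $\cR_{ae} \cap \cL_d$ and $\cR_d \cap \cL_{ae}$ contain an idempotent''. Finally, Theorem \ref{th3Cliff} recasts these two conditions as ``$d(ae)$ is a trace product'' and ``$(ae)d$ is a trace product''. So part 1 is essentially a direct translation through these two quoted results, read in both directions.

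For part 2, the right characterization to use is statement 1. Suppose $b$ and $b'$ are both inverses of $a$ along $d$, so that $bad = d = dab$, $b'ad = d = dab'$, and $b, b' \in dS \cap Sd$. Since $b \in dS$, write $b = d\xi$ and substitute $d = b'ad$ to obtain $b = b'a(d\xi) = b'ab$. Symmetrically, since $b' \in Sd$, write $b' = \eta d$ and substitute $d = dab$ to obtain $b' = (\eta d)ab = b'ab$. Hence $b = b'ab = b'$.

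I do not anticipate a serious obstacle in either part, since all the heavy lifting has been encapsulated in Theorem \ref{th_equiv} and in the quoted Clifford results. The only real decision is to pick the right characterization for each task: statement 3 for bridging to the trace-product criterion via the existence of idempotents in the intersections of Green's classes, and statement 1 for the short cancellation chain underlying uniqueness. A minor subtlety is keeping track of whether the auxiliary elements $\xi, \eta$ live in $S$ or in $S^1$, but this is harmless given the hypothesis $b, b' \in dS \cap Sd$.
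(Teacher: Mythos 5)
Your proposal is correct. Part 1 is essentially identical to the paper's argument: the paper also passes through statement 3 of Theorem \ref{th_equiv}, invokes Corollary \ref{cor1Cliff}(2) to translate ``$\cH_d$ contains an inner-and-outer inverse of $ae$'' into the existence of idempotents in $\cR_{ae}\cap\cL_d$ and $\cR_d\cap\cL_{ae}$, and then applies Theorem \ref{th3Cliff} to convert these into the two trace-product conditions. For uniqueness you take a slightly different route: the paper uses Corollary \ref{propouter} (that $ba\in\cR_d\cap E(S)$ and $ab\in\cL_d\cap E(S)$) together with Lemma \ref{lem4Cliff} to write, for two inverses $b$ and $c$, the chain $c=(ba)c=b(ac)=b$, i.e.\ it exploits that $ba$ is a left identity on $\cR_d$ and $ac$ a right identity on $\cL_d$. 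You instead work directly from characterization 1, substituting $d=b'ad$ into $b=d\xi$ to get $b=b'ab$ and $d=dab$ into $b'=\eta d$ to get $b'=b'ab$, whence $b=b'$. Your version is marginally more elementary in that it avoids the Green's-relations machinery for this step and uses only the defining identities; the paper's version is a one-liner once the idempotent facts are in hand. Both are complete, and your handling of the auxiliary elements $\xi,\eta$ is unproblematic since $b,b'\in dS\cap Sd$ guarantees they can be taken in $S$.
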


\begin{proof}
$\,$\\ \vspace{-.8cm}
\begin{enumerate}
\item[\underline{\textsc{Existence:}}]
Suppose $a$ is invertible along $d$ and let $e\in \cR_d\cap E\left(S\right)$ be the associated idempotent. Then by corollary \ref{cor1Cliff} the $\cH$-classes $\cR_{ae}\cap \cL_d$ and
$\cL_{ae}\cap \cR_d$ contain idempotents elements and by theorem \ref{th3Cliff}
$\left(ae\right)d$ and $d\left(ae\right)$ are trace products.\\
Suppose now there exists $e\in\left(\cR_d\cap E\left(S\right)\right)$, $\left(ae\right)d$ and $d\left(ae\right)$ are trace products. Then by corollary \ref{cor1Cliff} $ae$ admits a
generalized inverse in $\cH_d$ and $a$ is invertible along $d$.
\item[\underline{\textsc{Uniqueness:}}] Let $b$ and $c$ be two
generalized inverses of $a$ along $d$. Then $c=(ba)c=b(ac)b$ by
lemma \ref{lem4Cliff} and lemma \ref{propouter}.
\end{enumerate}
\vskip-1em
\end{proof}

The uniqueness of the inverse along an element allows us to
introduce the following notations: if it exists, we note $a^{\angle
d}$ the inverse of $a$ along $d$ and $a^{\measuredangle d}$ if it is
a inner inverse ($aba=a$).

\begin{corollary}\label{corinner}
Let $a,d\in S$. $a$ is inner invertible along $d$ if and only if $ad$ and $da$ are trace products.
\end{corollary}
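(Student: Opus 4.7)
The plan is to deduce this corollary from Theorem \ref{thmexun} combined with Corollary \ref{cor1Cliff} and Theorem \ref{th3Cliff}, the added ingredient being the exploitation of the inner condition $aba=a$ to move idempotents between Green's classes.

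For the forward direction, I would assume $b=a^{\measuredangle d}$ exists, so by Theorem \ref{th_equiv} we have $b\,\cH\,d$, $bab=b$, and $aba=a$. Corollary \ref{propouter} already gives that $ba\in\cR_d\cap E(S)$ and $ab\in\cL_d\cap E(S)$. The key additional observation is that the inner condition $aba=a$ yields $a=a(ba)\in S^1(ba)$, while $ba=b\cdot a\in S^1 a$, so $a\,\cL\,ba$; dually $a\,\cR\,ab$. Hence $ba$ is an idempotent lying in $\cR_d\cap\cL_a$, and $ab$ is an idempotent lying in $\cR_a\cap\cL_d$. Applying Theorem \ref{th3Cliff} to each of these intersections then gives that $ad$ and $da$ are trace products.

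For the converse, I would start from the assumption that $ad$ and $da$ are trace products, and apply Theorem \ref{th3Cliff} in the opposite direction to conclude that both $\cR_d\cap\cL_a$ and $\cR_a\cap\cL_d$ contain idempotents. This is precisely the hypothesis of Corollary \ref{cor1Cliff}(2) with the $\cH$-class $\cH_b$ taken to be $\cH_d$: it guarantees that $\cH_d$ contains an inverse $b$ of $a$, that is, an element satisfying both $aba=a$ and $bab=b$ with $b\,\cH\,d$. By statement $2$ of Theorem \ref{th_equiv}, such a $b$ is an inverse of $a$ along $d$, and since it also verifies $aba=a$, it is an inner inverse along $d$ in the sense of Definition \ref{df1}.

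I do not expect any real obstacle: the nontrivial step is simply recognizing that the inner condition $aba=a$ translates to $a\,\cL\,ba$ and $a\,\cR\,ab$, which is what upgrades the conclusion of Theorem \ref{thmexun} from ``idempotents exist in $\cR_d$ and $\cL_d$ producing trace products with $ae$'' to ``idempotents exist in $\cR_d\cap\cL_a$ and $\cR_a\cap\cL_d$ producing trace products with $a$ itself''. Everything else is a direct citation of Clifford's results already recalled in the preliminaries.
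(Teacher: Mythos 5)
Your proof is correct and follows essentially the same route as the paper: both directions rest on the idempotents $ba\in\cR_d\cap\cL_a$ and $ab\in\cR_a\cap\cL_d$ together with Theorem \ref{th3Cliff} and Corollary \ref{cor1Cliff}. The only cosmetic difference is in the forward direction, where the paper sets $e=ba$ and invokes Theorem \ref{thmexun} with $ae=aba=a$, while you verify the idempotent criterion of Theorem \ref{th3Cliff} directly; the two arguments are interchangeable.
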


\begin{proof}
Suppose $a$ is inner invertible along $d$ with inverse $b$. Pose
$e=ba$. Then $e\in \cR_d\cap E(S)$ by corollary \ref{propouter}, and
by theorem \ref{thmexun} $\left(ae\right)d$ and
$d\left(ae\right)$ are trace products. But $ae=aba=a$ since $b$ is an inner inverse of $a$ and $ad$ and $da$ are trace products.\\
Conversely, if $ad$ and $da$ are trace products, then corollary \ref{cor1Cliff} gives the desired result. \vskip-1em
\end{proof}

\begin{example}
Let once again $S=T_3$ be the full transformation semigroup of
example \ref{exT}. Then the inner invertibility of $a=(221)$ along
$d=(232)$ follows from the existence of the idempotent $e=(121)$ in
the $\cH$-class $\cR_d\cap \cL_a$ and of the idempotent $f=(223)$ in
the $\cH$-class $\cR_a\cap \cL_d$.
\end{example}

\begin{example}\label{exM}
Let $\cS$ be the subsemigroup of $\mathcal{M}_3(\NN)$ generated by
the matrices $$a=\left(
           \begin{array}{ccc}
            1 &0 & 0 \\
            0 & 1 & 0 \\
            0 & 0 & 0 \\
           \end{array}
         \right) \; b=\left(
           \begin{array}{ccc}
            0 &1 & 1 \\
            1 & 0 & 0 \\
            0 & 0 & 0 \\
           \end{array}
         \right)\;c=\left(
           \begin{array}{ccc}
            0 &1 & 0 \\
            1 & 0 & 0 \\
            0 & 0 & 0 \\
           \end{array}
         \right)\;d=\left(
           \begin{array}{ccc}
            1 &0 & 0 \\
            0 & 1 & 1 \\
            0 & 0 & 0 \\
           \end{array}
         \right)$$
         Then $a\cR b\cR c\cR d$ (the semigroup is right simple), $a\cL c$ and $b\cL d$. Since $a$
         and $d$ are idempotents, each $\cL$-class contains
         idempotents elements and it follows that any product of two
         elements is a trace product. Finally any element is
         invertible along another one, or equivalently any
         $\cH$-class $\{a,c\}$, $\{b,d\}$ is an inverse transversal
         (see note 3.16 in \cite{Zhang2002}).
\end{example}

As a corollary, we get an interesting characterization of the
inverse of $a$ along $d$ in terms of the group inverse
$(ad)^{\sharp}$ (or $(da)^{\sharp}$), as well as a second existence
criteria:
\begin{corollary}\label{corgroup}
Let $a,d\in S$. The three following statements are equivalent:
\begin{enumerate} \item $a$ is invertible along $d$;
\item $ad\cL d$ and $\cH_{ad}$ is a group;
\item $da\cR d$ and $\cH_{da}$ is a group.
\end{enumerate}
In this case $a^{\angle d}=d(ad)^{\sharp}=(da)^{\sharp}d$
\end{corollary}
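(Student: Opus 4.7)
The plan is to prove the cycle $1 \Rightarrow 2 \Rightarrow 1$ (with $1 \Leftrightarrow 3$ following by a left/right duality argument since the hypothesis $bad=d=dab$ and the membership $b\in dS\cap Sd$ are symmetric under reversal of multiplication), and then read off the formulas $a^{\angle d}=d(ad)^\sharp=(da)^\sharp d$ from the construction used in $2 \Rightarrow 1$ together with the uniqueness statement of Theorem~\ref{thmexun}.

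For $1\Rightarrow 2$, I start from $b=a^{\angle d}$ and use Corollary~\ref{propouter}: $ab$ is an idempotent in $\cL_d$. I then want to identify $\cH_{ad}$ with $\cH_{ab}$, so that Theorem~\ref{th7Green}(1) forces $\cH_{ad}$ to be a group. The $\cL$-part is easy: $ad\in S^1 d$ trivially, and $d=bad\in S^1(ad)$, so $ad\cL d\cL ab$. For the $\cR$-part I use the two sides of condition~1: writing $b=dx$ gives $ab=adx\in ad\cdot S^1$, while $bad=d$ premultiplied by $a$ gives $ad=(ab)(ad)\in ab\cdot S^1$; hence $ad\cR ab$. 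Combining, $ad\cH ab$ with $ab$ idempotent, which is exactly what is needed.

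For $2\Rightarrow 1$, suppose $ad\cL d$ and $\cH_{ad}$ is a group with identity $e$ and group inverse $(ad)^\sharp$. I define the candidate
\[
b := d(ad)^\sharp.
\]
I then verify statement~1 of Theorem~\ref{th_equiv}. Clearly $b\in dS$. For $b\in Sd$, note that $(ad)^\sharp\in\cH_{ad}\subseteq\cL_{ad}=\cL_d$, so $(ad)^\sharp\in S^1 d$ and therefore $b=d(ad)^\sharp\in Sd$. For the equations: $bad=d(ad)^\sharp(ad)=d\cdot e$, and since $e\in\cL_e=\cL_d$ is idempotent, Lemma~\ref{lem4Cliff} tells us that $e$ acts as a right identity on $\cL_d$, giving $de=d$; the computation $dab=d\cdot ad\cdot(ad)^\sharp=d\cdot e=d$ is analogous. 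So $b$ is the inverse of $a$ along $d$ and in particular $a^{\angle d}=d(ad)^\sharp$. The dual argument using $3\Rightarrow 1$ produces $a^{\angle d}=(da)^\sharp d$, and uniqueness in Theorem~\ref{thmexun} identifies the two expressions.

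I do not expect any serious obstacle; the main subtlety is the asymmetric step of showing $ad\cR ab$ in $1\Rightarrow 2$, where one must use \emph{both} sides $b\in dS$ (to get $ab\in ad\cdot S^1$) and $bad=d$ (to get $ad\in ab\cdot S^1$). The rest consists of routine application of Lemma~\ref{lem4Cliff} (idempotent as one-sided identity on its $\cL$- or $\cR$-class) and Theorem~\ref{th7Green} (an $\cH$-class with an idempotent is a group).
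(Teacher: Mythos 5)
Your proof is correct, but it routes around the paper's argument in two places worth noting. The paper proves the cycle $1\Rightarrow 2\Rightarrow 3\Rightarrow 1$ entirely inside $S$: its $1\Rightarrow 2$ goes through Theorem~\ref{thmexun} and the trace-product Theorem~\ref{th3Cliff} (showing $a\cH_d=\cH_{ad}$ and locating the idempotent $ab$ there), and its $2\Rightarrow 3$ is a fairly long explicit computation establishing $da\cR d$ and $da\cH(da)^2$ so that Theorem~\ref{th7Green}(2) applies. You instead (i) prove $1\Rightarrow 2$ by a bare-hands verification that $ad\cH ab$ from the identities $b=dx$, $bad=d=dab$, which is more elementary and bypasses the trace-product machinery entirely, and (ii) dispose of statement~3 by observing that statement~1 of Theorem~\ref{th_equiv} is self-dual under reversal of multiplication while $\cL$ and $\cR$ swap, so $1\Leftrightarrow 3$ is literally $1\Leftrightarrow 2$ in the opposite semigroup; this eliminates the paper's most computational step. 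Your $2\Rightarrow 1$ is essentially the paper's $3\Rightarrow 1$ transposed, except that where the paper extracts a factor $x$ with $d=dax$ and computes, you invoke Lemma~\ref{lem4Cliff} to get $de=d$ for the group identity $e\in\cL_d$ --- equally valid and slightly cleaner. The trade-off is that the paper's chain produces the explicit intermediate factorizations and stays in one semigroup, whereas your version is shorter but rests on the (correctly stated, and easily checked) self-duality of the notion of inverse along $d$; the identification $d(ad)^{\sharp}=(da)^{\sharp}d$ via uniqueness in Theorem~\ref{thmexun} is handled the same way in both.
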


\begin{proof}
$\,$\\ 
$1.\Rightarrow 2.$ First, suppose $a$ is invertible along $d$ and
let $e$ be the idempotent in $\cR_d$ such that $b$ is the inverse of
$ae$ in $\cH_d$. By theorem \ref{thmexun}, $(ae)d$ is a trace
product and it follows that $ae\cH_d=\cH_{aed}$ or equivalently
since $e$ is a right identity on $\cH_d$, $a\cH_d=\cH_{ad}$. Since
$b\in \cH_d$ and $ab$ is idempotent, the $\cH$-class $\cH_{ad}$
contains the idempotent $ab$ hence it is a group. Finally equality
$bad=d$ gives $ad\cL d$.

$2. \Rightarrow 3.$ Suppose now $ad\cL d$ and $\cH_{ad}$ is a group,
and let $x\in S$ such that $d=xad$. From
$ad=ad(ad)^{\sharp}ad=(ad)(ad)(ad)^{\sharp}=(ad)^{\sharp}(ad)(ad)$
we get $d=xad=x(ad)(ad)(ad)^{\sharp}=(da)d(ad)^{\sharp}$ and $da\cR
d$. To prove that $\cH_{da}$ is a group, by theorem \ref{th7Green}
we only need to prove that $da\cH (da)^2$. But
$$da=xada=x(ad)(ad)(ad)^{\sharp}a=x(ad)(ad)(ad)^{\sharp}(ad)(ad)^{\sharp}a=d(ad)(ad)(ad)^{\sharp}(ad)^{\sharp}a$$
and $da\cR (da)^2$. But since $d=dad(ad)^{\sharp}$ we have also
$$da=dad(ad)^{\sharp}a=d(ad)^{\sharp}(ad)(ad)(ad)^{\sharp}a=d(ad)^{\sharp}(ad)^{\sharp}(ad)(ad)a$$
and $da\cL (da)^2$. Finally $da\cH (da)^2$.

$3. \Rightarrow 1.$ Suppose now $da\cR d$ and $\cH_{da}$ is a group.
Pose $b=(da)^{\sharp}d$ and let $x\in S$ such that $d=dax$. Then
$bad=(da)^{\sharp}dad=(da)^{\sharp}dadax=dax=d$, and
$dab=da(da)^{\sharp}d=da(da)^{\sharp}dax=dax=d$.\\
But also $b=(da)^{\sharp}d=(da)^{\sharp}dax=da(da)^{\sharp}x$ and
$b\in dS\cap Sd$. Finally $b=(da)^{\sharp}d$ is the inverse of $a$
along $d$. \vskip-1em
%
\end{proof}

\begin{example}
Let $\cS$ be the subsemigroup of $\mathcal{M}_3(\NN)$ of example
\ref{exM}. The inverse of $b$ along $c$ is $b^{angle
c}=c(bc)^{\sharp}=ca^{\sharp}=ca=c$.
\end{example}

We finally prove an interesting result regarding commutativity. If
$A$ is a subset of the semigroup $S$, $A'$ denotes as usual the
commutant of $A$ and $A''$ its bicommutant.

\begin{theorem}\label{commut}
Let $a,d\in S$ and pose $A=(a,d)$. If $a$ is invertible along $d$,
then $a^{\angle d}\in A''$.
\end{theorem}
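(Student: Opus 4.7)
\emph{Proof plan.} The plan is to leverage the explicit formula from Corollary~\ref{corgroup}, $b := a^{\angle d} = d(ad)^{\sharp} = (da)^{\sharp}d$, and reduce the statement to the double--commutant property of the group inverse. Fix any $x\in A'$, so $xa=ax$ and $xd=dx$; the task is to show $xb=bx$. Because $x$ commutes with both $a$ and $d$, it commutes with $y := ad$, so the whole theorem will follow once I establish that $x$ commutes with $y^{\sharp} = (ad)^{\sharp}$: then
$$xb = xd(ad)^{\sharp} = dx(ad)^{\sharp} = d(ad)^{\sharp}x = bx.$$

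The heart of the matter is therefore the following commutativity lemma, which I would state and prove as a side claim: \emph{if $y\in S$ admits a group inverse $y^{\sharp}$ and $xy=yx$, then $xy^{\sharp}=y^{\sharp}x$}. My plan is to work inside the local submonoid $eSe$, where $e = yy^{\sharp} = y^{\sharp}y$ is the idempotent identity of the group $\cH_{y}$ (Theorem~\ref{th7Green}). Using $ey=ye=y$ and $ey^{\sharp}=y^{\sharp}e=y^{\sharp}$ one checks at once that $xy$, $yx$, and $xe=(xy)y^{\sharp}$ all lie in $eSe$, and that $y$ is a unit of the monoid $eSe$ with two--sided inverse $y^{\sharp}$. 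Since $xy\in eSe$ commutes with $y$ (immediate from $xy=yx$), the elementary monoid fact ``commuting with a unit forces commuting with its inverse'' delivers $(xy)y^{\sharp}=y^{\sharp}(xy)$, i.e.\ $xe=ex$.

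With $xe=ex$ in hand, I would apply the same monoid argument a second time: $xe\in eSe$ and $(xe)y = xy = y(xe)$, so $xe$ commutes with $y$ in $eSe$, hence with $y^{\sharp}$. Unfolding $(xe)y^{\sharp} = x(ey^{\sharp}) = xy^{\sharp}$ and $y^{\sharp}(xe) = y^{\sharp}(ex) = (y^{\sharp}e)x = y^{\sharp}x$ yields $xy^{\sharp}=y^{\sharp}x$, closing the argument. The principal obstacle is the commutativity lemma itself: in a general semigroup the polynomial representations of $a^{\sharp}$ available in a ring are out of reach, and it is the reduction to the local monoid $eSe$, combined with the ``unit/inverse'' identity applied twice, that makes the proof go through.
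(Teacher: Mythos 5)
Your proposal is correct, but it takes a genuinely different route from the paper. The paper's proof works directly from the defining identities of statement 1 of Theorem~\ref{th_equiv}: writing $b=dx=yd$ and using $bad=d=dab$, it first shows $cbad=bcad$ and $dabc=dacb$ for $c\in A'$, and then obtains $cb=bc$ by a chain of substitutions ($cb=cbab=cbadx=\dots=bc$); it never invokes Corollary~\ref{corgroup}. You instead use the explicit formula $a^{\angle d}=d(ad)^{\sharp}=(da)^{\sharp}d$ from Corollary~\ref{corgroup} and reduce everything to the double-commutant property of the group inverse, which you prove by localizing at the idempotent $e=yy^{\sharp}$ and applying twice the fact that, in the monoid $eSe$, an element commuting with a unit commutes with its inverse. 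I checked the details (membership of $xy$, $yx$, $xe$ in $eSe$; the derivation $xe=(xy)y^{\sharp}=y^{\sharp}(xy)=ex$; the second pass giving $xy^{\sharp}=y^{\sharp}x$) and they are sound; note also that $x\in A'$ does commute with $ad$, as your reduction requires, and that Corollary~\ref{corgroup} precedes Theorem~\ref{commut} in the paper, so there is no circularity. What each approach buys: the paper's computation is self-contained and needs nothing beyond the definition, whereas your argument is more conceptual and makes visible that the general statement is really the special case $d=a$ (i.e., $a^{\sharp}\in(a)''$, which the paper later derives \emph{from} Theorem~\ref{commut} via Theorem~\ref{thclassical}) combined with the factorization $a^{\angle d}=d(ad)^{\sharp}$; your localization lemma for the group inverse is also independently reusable.
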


\begin{proof}
Let $b$ be the inverse of $a$ along $d$. It then verifies
$bad=d=dab$ and $b\in dS\cap Sd$. Let $x,y \in S$ such that
$b=dx=yd$. Suppose $c\in A'$. Then
\begin{eqnarray*}
cd&=&cbad=cdab=dacb\\ 
&=& dc=badc=bcad=dabc
\end{eqnarray*}
hence $cbad=bcad,\quad dabc=dacb$. Then \begin{eqnarray*}
cb&=&cbab=cbadx=bcadx=bcab\\ 
&=& bacb=ydacb=ydabc=babc=bc
\end{eqnarray*} and $b\in
A''$. \vskip-1em \end{proof}

\begin{remark} If $da=ad$, the two previous results give that $b=a^{\angle
d}$ commutes with $a$ and $d$ and that $\cH_d=\cH_{ad}$ is a group.
\end{remark}

\section{Inverses along $d$ and classical inverses}

One of main interest of this notion of inverse along an element is
that the classical generalized inverses belong to this class:

\begin{theorem}\label{thclassical}
Let $a\in S$. ($S$ is a $*$-semigroup in $3.$)  \begin{enumerate}
\item $a$ is group invertible if and only if it is invertible along $a$. In this case the inverse along $a$ is inner and coincide with the group inverse.
\item $a$ admits a Drazin inverse if and only if it is invertible along some $a^m,\; m\in \NN$, and in this case the two coincide.
\item $a$ is Moore-Penrose invertible if and only if it is invertible along $a^*$. In this case the inverse along $a^*$ is inner and coincide with the Moore-Penrose inverse.
\end{enumerate}
\end{theorem}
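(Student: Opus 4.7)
The plan is to handle each of the three parts by verifying the two implications separately. In each case the ``$\Rightarrow$'' direction is a short verification that the candidate classical inverse satisfies statement~$1$ of Theorem~\ref{th_equiv} with the appropriate choice of $d$. For the reverse direction the main tools are Corollary~\ref{corgroup}, which supplies the explicit formula $a^{\angle d}=d(ad)^{\sharp}=(da)^{\sharp}d$, and the Remark after Theorem~\ref{commut}, which forces $a^{\angle d}$ to commute with $a$ and $d$ whenever $a$ and $d$ themselves commute. Uniqueness on both sides then identifies them and yields the claimed equalities.

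\textbf{Parts $1$ and $2$.} For part~$1$, take $d=a$ and check directly that $a^\sharp$ satisfies $a^\sharp a\cdot a = a = a\cdot aa^\sharp$ and $a^\sharp = a^\sharp aa^\sharp \in aS\cap Sa$. Conversely, if $b=a^{\angle a}$, Corollary~\ref{corgroup} forces $a^2\cH a$ with $\cH_{a^2}$ a group, so $\cH_a$ is a group, $a^\sharp$ exists, and $b=a^\sharp$ by uniqueness; innerness follows since the Remark yields $ab=ba$, whence $aba = a\cdot ab = a$ from $dab=d$. For part~$2$, take $d=a^m$. Given $b=a^D$, iterating $b=b^2a$ together with $ab=ba$ produces $b=b^{m+1}a^m=a^m b^{m+1}\in a^m S\cap S a^m$, and $a^m=a^{m+1}b=ba^{m+1}$ is exactly $bad=d=dab$. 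Conversely, if $b=a^{\angle a^m}$, the Remark (applicable since $a$ commutes with $a^m$) gives $ab=ba$; then $b=bab=b\cdot ab=b\cdot ba=b^2 a$, and $a^m\cdot ab=a^m$ rewrites as $a^{m+1}b=a^m$, so the Drazin axioms all hold and $b=a^D$.

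\textbf{Part $3$.} Take $d=a^*$. The forward direction is a short calculation from the Penrose identities: $a^+aa^* = (a^+a)^*a^* = (aa^+a)^* = a^*$, dually $a^*aa^+ = a^*$, and $a^+ = a^+aa^+ = (a^+a)^*a^+ = a^*(a^+)^*a^+ \in a^*S$ with the dual inclusion $a^+\in Sa^*$. The main obstacle is the reverse direction, in which the two $*$-symmetry Penrose axioms must be manufactured from purely structural data. The key lemma I would isolate is that in a $*$-semigroup the group inverse of a self-adjoint element, when it exists, is self-adjoint; this follows at once from uniqueness of the group inverse applied to the $*$-image of its defining identities. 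With this in hand, Corollary~\ref{corgroup} gives $b=a^{\angle a^*} = a^*(aa^*)^{\sharp} = (a^*a)^{\sharp}a^*$; writing $q=(aa^*)^{\sharp}=q^*$ and using that $q$ commutes with $aa^*$, one gets $(ab)^* = q(aa^*)^* = q\cdot aa^* = aa^*\cdot q = ab$ and dually $(ba)^*=ba$. Finally $aba = a(ba) = a(ba)^* = aa^*b^* = (baa^*)^* = (a^*)^* = a$ delivers the last Penrose axiom, while $bab=b$ is immediate from Theorem~\ref{th_equiv}.$2$, so $b=a^+$.
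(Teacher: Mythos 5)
Your proof is correct, but in parts 2 and 3 it takes a genuinely different route from the paper's. For part 1 the two arguments are essentially identical (both rest on Corollary \ref{corgroup} and uniqueness). For part 2 the paper leans on two external results of Drazin (Theorems 4 and 7 of \cite{Drazin58}): the forward direction extracts the idempotent $ae=ea\in\cH_{a^m}$ from Drazin's structure theorem, and the converse passes through the characterization of Drazin invertibility as strong $\pi$-regularity, i.e.\ $a^{m+1}\cH a^m$, which Corollary \ref{corgroup} supplies. You instead verify the three Drazin axioms directly: forward, you iterate $b=b^2a$ to place $a^D$ in $a^mS\cap Sa^m$ and check $ba^{m+1}=a^m=a^{m+1}b$; conversely, you invoke Theorem \ref{commut} (via the Remark, applicable since $a$ and $a^m$ commute) to get $ab=ba$ and then read off $b=b^2a$ and $a^{m+1}b=a^m$ from $bab=b$ and $dab=d$. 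This is more self-contained --- no appeal to \cite{Drazin58} --- and it produces the Drazin axioms for the \emph{same} exponent $m$, a point the paper's converse glosses over. For part 3 the forward directions coincide; in the converse the paper establishes $aba=a$ structurally, showing that $aa^*$ and $a^*a$ are trace products by transposing Green's relations, and then checks hermitian-ness of $ab$ and $ba$ by a computation that tacitly uses $\left((aa^*)^{\sharp}\right)^*=(aa^*)^{\sharp}$. You isolate exactly that fact as an explicit lemma (the group inverse of a self-adjoint element is self-adjoint, by uniqueness) and derive both symmetry conditions and $aba=a$ algebraically from the formula $b=a^*(aa^*)^{\sharp}$ of Corollary \ref{corgroup} together with $baa^*=a^*$. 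The paper's route yields slightly more structural by-product (the trace-product statements); yours makes explicit a hypothesis the paper uses silently and keeps the whole argument inside the machinery already developed.
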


\begin{proof}
$\,$
\begin{enumerate}
\item[\underline{Group inverse:}] Suppose $a$ is group invertible. Then $\cH_a$ is
a group that contains the group inverse $a^{\sharp}$. It follows
that $a$ is invertible along $a$,
with inverse $a^{\measuredangle a}=a^{\sharp}$. \\
Conversely, if $a^{\angle a}$ exists, then by corollary
\ref{corgroup} $\cH_a=\cH_{a^2}$ is a group. $a$ is then group
invertible with inverse $a^{\sharp}\in H_a$ and by uniqueness of the
inverse along $a$, $a^{\angle a}=a^{\sharp}=a^{\measuredangle a}$.
\item[\underline{Drazin inverse:}]
Suppose $a$ Drazin invertible (with Drazin inverse $a^D$). Then (Theorem 7 in \cite{Drazin58}) there exists $m\in \NN^*$, there exists $e$ idempotent in $\cH_{a^m}$, $ae=ea\in
\cH_{a^m}$. Then $\cH_{a^m}$ is a group (theorem \ref{th7Green}) and $a$ is invertible along $a^m$ (take $e$ for idempotent). Moreover (see the proof of Theorem 7 in \cite{Drazin58}),
the inverse of $ae$ in $\cH_{a^m}$ is precisely
the Drazin inverse, hence $a^{D}=a^{\angle a^m}$.\\
Conversely, suppose there exists $m\in \NN^*$, $a$ invertible along $a^m$. Then by corollary \ref{corgroup} $a^{m+1}\cH a^m$. But (Theorem 4 p~510 in \cite{Drazin58}) $a$ is Drazin
invertible if and only if it is strongly $\pi$-regular, \textit{i.e.} there exists $m\in \NN^*$, $x,y\in S^1$,
$$a^{m+1}x=a^m=ya^{m+1}$$ or using Green's relation if and only if
there exists $m\in \NN^*, a^{m+1}\cH a^m$. Finally $a$ is Drazin
invertible and by the previous result, the two inverses coincide.
\item[\underline{M-P inverse:}] Suppose $a$ Moore-Penrose invertible with Moore-Penrose inverse $a^+$. Then
$$a^{+}=(a^{+}a)a^{+}=\left(a^{+}a\right)^{*}a^{+}=a^{*}\left(a^{+}\right)^{*}a^{+}$$
$$a^{+}=a^{+}\left(aa^{+}\right)=a^{+}\left(aa^{+}\right)^{*}=a^{+}\left(a^{+}\right)^{*}a^{*}$$
$$a^{*}=\left(aa^{+}a\right)^{*}=\left(a^{+}a\right)^{*}a^{*}=a^{+}a a^{*}$$
$$a^{*}=\left(aa^{+}a\right)^{*}=a^{*}\left(aa^{+}\right)^{*}=a^{*}aa^{+}$$
These four relations imply that $a^{+}\cH a^*$, hence $a$ is inner invertible along $a^*$ with $a^{\measuredangle a^{*}}=a^{+}$.\\
Conversely, suppose $a$ is invertible along $a^*$ with inverse $b$. Let $e\in \cR_{a^{*}}\cap E(S)$. Then $\left(ae\right)a^*=aa^*$ is a trace product and $aa^* \cL a^*$, and also by
transposition $aa^* \cR a$. Finally $aa^*$ is a trace product.\\
But working with $f$ idempotent in $\cL_{a^{*}}$ give that $a^*\left(fa\right)=a^*a$ is a trace product and $a^*a \cR a^*$, and also by transposition $a^*a \cL a$. $a^*a$ is then also
a trace product and the inverse is inner.\\
We finally verify that $ab$ and $ba$ are hermitian using corollary
$\ref{corgroup}$:
$ab=aa^*(aa^*)^{\sharp}=(aa^*)^{\sharp}aa^*=b^*a^*=(ab)^*$ and
$ba=(a^*a)^{\sharp}(a^*a)=(a^* a)(a^*a)^{\sharp}=a^*b^*=(ba)^*$.
\end{enumerate}
\vskip-1em
\end{proof}

Combining theorem \ref{thmexun} (or corollary \ref{corgroup}) and theorem \ref{thclassical}, we then get directly the following 
existence criteria and commuting relations for the classical
inverses \cite{Green51}, \cite{Drazin58}, \cite{Harte92},
\cite{Huang93}, \cite{Koliha01}, \cite{Koliha02}, \cite{Patricio04},
\cite{Mary2008SM}:

\begin{corollary}\label{corex}
Let $a\in S$. \begin{enumerate}
\item A group inverse $a^{\sharp}$ exists if and only if
$a^2Ha$, and $a^{\sharp}\in \left(a\right)''$.
\item A Drazin inverse $a^D$ exists if and only if there exists $m\in \NN^*, a^{m+1}\cH
a^m$, and $a^{D}\in \left(a\right)''$.
\item A Moore-Penrose inverse $a^{+}$ exists if and only if $aa^*\in \cR_a$ 
and $a^*a\in \cL_{a}$, and $a^{+}\in \left(a,a^*\right)''$.
\end{enumerate}
\end{corollary}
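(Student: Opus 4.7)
The plan is to reduce each item to a common three-step argument. First, Theorem \ref{thclassical} identifies each classical inverse with the inverse of $a$ along a suitable element $d$: $d=a$ for the group case, $d=a^m$ for Drazin, $d=a^*$ for Moore--Penrose. Second, Corollary \ref{corgroup} transcribes existence of $a^{\angle d}$ into Green's-relations conditions on $ad$ and $da$. Third, Theorem \ref{commut} delivers the bicommutant statements; for the Drazin case I would also use the elementary equality $(a)'=(a,a^m)'$, since any element commuting with $a$ automatically commutes with every power of $a$.

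For the forward implications I would simply read off both symmetric forms of Corollary \ref{corgroup}. The group inverse produces $a^2\cL a$ and $a^2\cR a$, i.e., $a^2\cH a$; the Drazin inverse similarly yields $a^{m+1}\cH a^m$; the Moore--Penrose inverse gives $aa^*\cL a^*$ and $a^*a\cR a^*$, and since $*$ interchanges $\cR$ and $\cL$, these rewrite as $aa^*\cR a$ and $a^*a\cL a$.

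The converse implications all come down to establishing the group structure of the relevant $\cH$-class via Theorem \ref{th7Green}.2. The group case is immediate since $a,a,a^2$ already lie in $\cH_a$ by hypothesis. For Drazin, my plan is to iterate the factorisations $a^m=a^{m+1}x=ya^{m+1}$ (available from $a^{m+1}\cH a^m$) to obtain $a^{m+k}\cH a^m$ for all $k\geq 0$; in particular $a^{2m}=a^m\cdot a^m$ lies in $\cH_{a^m}$, and Theorem \ref{th7Green}.2 applies. The main obstacle is the Moore--Penrose case, since neither hypothesis alone appears to force $\cH_{aa^*}$ to be a group. My plan is to combine both: from $a=aa^*x$ I obtain $aa^*a=(aa^*)^2x$, while taking the adjoint of $a=ya^*a$ yields $a^*=a^*ay^*$, hence $aa^*=aa^*ay^*=(aa^*)^2xy^*$; this gives $aa^*\cR(aa^*)^2$, and adjointness then gives $aa^*\cL(aa^*)^2$, so $aa^*\cH(aa^*)^2$. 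Theorem \ref{th7Green}.2 now delivers the group structure of $\cH_{aa^*}$, Corollary \ref{corgroup} gives invertibility of $a$ along $a^*$, and Theorem \ref{commut} completes the proof with $a^+\in(a,a^*)''$.
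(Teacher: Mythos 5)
Your proposal is correct and follows exactly the route the paper intends: the paper offers no written proof beyond ``combine Theorem \ref{thmexun} (or Corollary \ref{corgroup}) with Theorem \ref{thclassical}'', and your argument is precisely that combination, with Theorem \ref{commut} (plus the observation $(a)'=(a,a^m)'$) handling the bicommutant claims. The extra details you supply for the converses --- iterating $a^m=a^{m+1}x=ya^{m+1}$ to get $a^{2m}\cH a^m$ in the Drazin case, and combining $a=aa^*x$ with $a^*=a^*ay^*$ to show $aa^*\cH (aa^*)^2$ in the Moore--Penrose case --- are correct and accurately fill in the steps the paper delegates to Corollary \ref{corgroup}, Theorem \ref{th7Green} and the cited theorem of Drazin.
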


Note that many other results involving classical inverses are then straightforward consequences of theorem \ref{thclassical}. We give two instances of this:

\begin{proposition}[(proposition 2 p.~162 in \cite{Patricio04})]
Given a in a ring $R$ with involution $*$, the following conditions hold:
\begin{enumerate}
\item If $aR = a^*R$ then $a^{+}$ exists with respect to $*$ iff $a^{\sharp}$ exists, in which case $a^{+}=a^{\sharp}$.
\item If $a^{+}$ exists with respect to $*$, $a^{\sharp}$ exists and $a^{+}=a^{\sharp}$ then $aR = a^*R$.
\end{enumerate}
\end{proposition}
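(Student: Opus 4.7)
The plan is to translate both parts via Theorem \ref{thclassical}, which tells us that $a^{\sharp}$ exists iff $a$ is invertible along $a$ and $a^{+}$ exists iff $a$ is invertible along $a^{*}$. The preliminary observation is that, since $*$ is a ring anti-automorphism, applying it to the equality $aR = a^{*}R$ yields $Ra^{*} = Ra$ automatically. Hence the hypothesis $aR = a^{*}R$ is in fact equivalent to the stronger statement $a \, \cH \, a^{*}$, i.e.\ to $\cH_{a} = \cH_{a^{*}}$.

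For part (1), I will exploit statement $2.$ of Theorem \ref{th_equiv}: the inverse of $a$ along $d$ is exactly an outer inverse of $a$ lying in $\cH_{d}$, and by Theorem \ref{thmexun} such an element is unique when it exists. In particular, whether $a$ is invertible along $d$ and the value of $a^{\angle d}$ depend on $d$ only through its $\cH$-class. Once $\cH_{a} = \cH_{a^{*}}$ has been established, invertibility of $a$ along $a$ and along $a^{*}$ become the same condition and the two inverses coincide; combined with Theorem \ref{thclassical} this gives (1) together with $a^{+} = a^{\sharp}$.

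For part (2), I run the converse argument: by Corollary \ref{cor4Cliff} we have $a^{\sharp} \in \cH_{a}$, and by Theorem \ref{thclassical}, which identifies $a^{+}$ as the inverse of $a$ along $a^{*}$ and hence places it in $\cH_{a^{*}}$, we have $a^{+} \in \cH_{a^{*}}$. Distinct $\cH$-classes being disjoint, the equality $a^{\sharp} = a^{+}$ forces $\cH_{a} = \cH_{a^{*}}$, so a fortiori $aR = a^{*}R$.

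Beyond Theorem \ref{thclassical}, the proof is essentially bookkeeping; the only step requiring a modicum of thought is the preliminary $*$-trick that promotes $aR = a^{*}R$ to $a \, \cH \, a^{*}$, and that is the one place where any real subtlety lies.
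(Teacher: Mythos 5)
Your proof is correct and follows essentially the same route as the paper: both reduce $aR=a^*R$ to $a\,\cH\,a^*$ by applying the involution, and then invoke the fact that invertibility along $d$ and the value of $a^{\angle d}$ depend only on $\cH_d$, together with Theorem \ref{thclassical}. Your write-up merely makes explicit (via statement 2 of Theorem \ref{th_equiv}, the uniqueness from Theorem \ref{thmexun}, and the disjointness of $\cH$-classes for part (2)) what the paper compresses into one sentence.
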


\begin{proof}
By transposition, $(aR = a^*R)\iff (Ra^*=Ra) \iff (a\cH a^*)$. Since
the inverse along an element $d$ depends only on the $\cH$-class of
$d$, theorem \ref{thclassical} then give the desired result.
\vskip-1em
\end{proof}

Remark that in rings with involution, $(aR = a^*R)\iff (aa^*=a^*a)$, and we could have used corollary \ref{corgroup} instead.

\begin{theorem}[(Theorem 5.3 p.~144 in \cite{Koliha02})]
Let $R$ be a ring with involution $*$. An element $a\in R$ is Moore-Penrose invertible if and only if $aa^*\cR^* a$, $a^* a\cL^* a$ and $a^*a$ is group invertible. Then also $aa^*$ is
group invertible and
$$a^{+}=(a^*a)^{\sharp}a^*=a^*(aa^*)^{\sharp}$$
\end{theorem}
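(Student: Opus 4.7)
The plan is to reduce this theorem to Theorem \ref{thclassical} (which identifies $a^{+}$ with the inverse $a^{\angle a^{*}}$ along $a^{*}$) together with Corollary \ref{corgroup} (which rephrases invertibility along $d$ as a Green relation plus group invertibility of $ad$ or $da$). The statement then amounts to a translation between its $\cR^{*}/\cL^{*}$ hypotheses and the honest $\cR/\cL$ relations that feed Corollary \ref{corgroup}. For the ``only if'' direction, assuming $a^{+}$ exists, Theorem \ref{thclassical} makes $a$ inner invertible along $a^{*}$; Corollary \ref{corinner} then puts $aa^{*}$ and $a^{*}a$ in the trace-product classes $\cR_{a}\cap\cL_{a^{*}}$ and $\cR_{a^{*}}\cap\cL_{a}$, giving $aa^{*}\cR a$ and $a^{*}a\cL a$, whence a fortiori $aa^{*}\cR^{*}a$ and $a^{*}a\cL^{*}a$; and Corollary \ref{corgroup} simultaneously ensures that $\cH_{a^{*}a}$ is a group, so $a^{*}a$ is group invertible.

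For the converse I would apply Corollary \ref{corgroup} with $d=a^{*}$: it suffices to prove $a^{*}a\cR a^{*}$, since the group condition on $\cH_{a^{*}a}$ is the third hypothesis. Because $a^{*}a$ is self-adjoint, uniqueness of the group inverse (noting that $((a^{*}a)^{\sharp})^{*}$ satisfies the group-inverse axioms for $a^{*}a$) forces $(a^{*}a)^{\sharp}$ to be self-adjoint; consequently the group idempotent $e=(a^{*}a)(a^{*}a)^{\sharp}=(a^{*}a)^{\sharp}(a^{*}a)\in\cH_{a^{*}a}$ is also self-adjoint. From $(a^{*}a)e=a^{*}a=(a^{*}a)\cdot 1$, the hypothesis $a^{*}a\cL^{*}a$ yields $ae=a$; applying the involution produces $a^{*}=ea^{*}=a^{*}a\cdot\bigl((a^{*}a)^{\sharp}a^{*}\bigr)\in a^{*}aR$, and combined with the trivial $a^{*}a\in a^{*}R$ this is exactly $a^{*}a\cR a^{*}$. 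Corollary \ref{corgroup} then delivers invertibility of $a$ along $a^{*}$ together with the formula $a^{\angle a^{*}}=(a^{*}a)^{\sharp}a^{*}=a^{*}(aa^{*})^{\sharp}$ and group invertibility of $aa^{*}$; Theorem \ref{thclassical} identifies this inverse with $a^{+}$.

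The step I expect to be the main obstacle is precisely the passage in the converse from the weaker $\cL^{*}$-hypothesis to the honest $\cR$-relation that Corollary \ref{corgroup} requires. The device that carries it through is the self-adjoint group idempotent $e\in\cH_{a^{*}a}$: it functions as a right identity for $a^{*}a$, the $\cL^{*}$-hypothesis transfers this to $ae=a$, and self-adjointness of $e$ then transports the resulting identity across the involution to land $a^{*}$ inside $a^{*}aR$. Everything else is direct bookkeeping against Theorem \ref{thclassical} and Corollary \ref{corgroup}.
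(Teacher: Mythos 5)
Your proof is correct and takes essentially the same route as the paper's: both directions reduce to Corollary \ref{corgroup} and Theorem \ref{thclassical}, and your converse uses the same key device, namely the group idempotent of $\cH_{a^*a}$ together with the $\cL^*$-hypothesis to obtain $ae=a$ and hence the honest Green relation $a^*a\,\cR\,a^*$. The only cosmetic differences are that you write that idempotent explicitly as $(a^*a)(a^*a)^{\sharp}$ and record its self-adjointness, whereas the paper takes an idempotent $f=xa^*a\in\cH_{a^*a}$, derives $a\,\cL\,a^*a$, and leaves the transposition across the involution implicit.
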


\begin{proof}
Suppose $a^{+}$ exists. Then $a$ is invertible along $a^*$. By corollary \ref{corgroup}, $\cH_{aa^*}$ and $\cH_{a^*a}$ are groups and
$$a^{\angle d}=(a^*a)^{\sharp}a^*=a^*(aa^*)^{\sharp}$$
Also by corollary \ref{corgroup} $aa^*\in \cR_a$ and $a^*a\in \cL_{a}$. $\cR\subset\cR^*$ and $\cL\subset\cL^*$, hence $aa^*\cR^* a$, $a^* a\cL^* a$.\\
Conversely, suppose $aa^*\cR^* a$, $a^* a\cL^* a$ and $a^*a$ is
group invertible. Then $\cH_{a^*a}$ is a group hence it contains a
idempotent $f=xa^* a$ that verifies $f\cL^*a$. . From $ff=f$ we get
$af=a$, and finally  $a\cL f\cL a^* a$. The conclusion then follows
from corollary \ref{corgroup} and theorem \ref{thclassical}.
\vskip-1em\end{proof}

\bibliographystyle{amsplain}

\end{document}